\documentclass[11pt]{amsart}
\usepackage{amssymb,graphicx}
\usepackage{verbatim}
\usepackage{float}
\usepackage[dvipsnames]{xcolor}
\usepackage{amsmath}
\usepackage{cleveref}
\usepackage{autonum}
\usepackage{enumitem}
\usepackage{ulem}
\newtheorem{theorem}{Theorem}[section]
\newtheorem{lemma}[theorem]{Lemma}

\theoremstyle{definition}
\newtheorem{definition}[theorem]{Definition}
\newtheorem*{nota}{Notation}

\newcommand{\R}{{\mathbb R}}

\usepackage{refcount}

\newcounter{cte}

\newif\ifrevisioncolors
\revisioncolorstrue

\newcommand{\versiontag}[1]{%
\par\smallskip
\noindent\fbox{\scriptsize\bfseries\sffamily #1}%
\par\nobreak\smallskip
}

\makeatletter
\newenvironment{oldversion}
{%
  \versiontag{OLD}%
  \begingroup
  \ifrevisioncolors\color{RoyalBlue}\fi
  \@afterindentfalse\@afterheading
}
{\endgroup}

\newenvironment{newversion}
{%
  \versiontag{NEW}%
  \begingroup
  \ifrevisioncolors\color{BrickRed}\fi
  \@afterindentfalse\@afterheading
}
{\endgroup}
\makeatother


\numberwithin{equation}{section}

\author[Bousquet]{Pierre Bousquet}
\address{Univ Toulouse, INSA Toulouse, CNRS, IMT, Toulouse, France.}
\email{pierre.bousquet@math.univ-toulouse.fr}

\author[Mariconda]{Carlo Mariconda}
\address{Dipartimento di Matematica "Tullio Levi-Civita", Universit\`a degli Studi di Padova, Via Trieste 63, 35121 Padova, Italy.}
\email{carlo.mariconda@unipd.it}

\author[Treu]{Giulia Treu}
\address{Dipartimento di Matematica "Tullio Levi-Civita", Universit\`a degli Studi di Padova, Via Trieste 63, 35121 Padova, Italy.}
\email{giulia.treu@unipd.it}

\title{Approximation in norm and energy with bounded functions for autonomous variational problems}

\date{\today}
\keywords{}
\subjclass[2020]{49J45, 49N60}

\begin{document}
\begin{abstract}
For a general class of autonomous variational problems, associated to a continuous Lagrangian depending on a scalar state variable and its gradient, 
we show that every admissible function with finite energy and bounded boundary datum can be approximated, in the same Dirichlet class, by bounded admissible functions, simultaneously in \(W^{1,p}\) and in energy. 
 The key point is to consider the convex envelope of the Lagrangian with respect to the gradient variable. Exploiting the approximation techniques for  convex variational problems from~\cite{BousquetMaricondaTreu2026} and relying on a classical relaxation argument yields the desired bounded  approximation, without growth, coercivity, convexity, or structural assumptions on the Lagrangian.
\end{abstract}
\maketitle
\tableofcontents
\section{Introduction}
\subsection{Statement of the problem}
 Let \(\Omega\) be a bounded open set in \(\R^N\), let \(p\in[1,N]\), and let
\[
\varphi\in W^{1,p}(\R^N)\cap L^\infty(\R^N).
\]
We consider autonomous integral functionals of the form
\[
\mathcal L(u):=\int_\Omega H(u(x),\nabla u(x))\,dx,
\]
where
\[
H:\R\times\R^N\to\R
\]
is continuous. Throughout the paper, we use the notation
\[
\overline H(t,\xi):=|H(t,\xi)|+|\xi|^p.
\]
The admissible class is
\[
\mathcal A:=
\left\{
u\in W^{1,p}_\varphi(\Omega):
 H(u,\nabla u)\in L^1(\Omega)
\right\},
\]
where \(W^{1,p}_\varphi(\Omega)=\varphi+W^{1,p}_0(\Omega)\) and \(W^{1,p}_0(\Omega)\) is defined as the closure of smooth compactly supported functions in \(\Omega\). Then \(\mathcal{L}\) is well-defined on \(\mathcal{A}\).

\begin{definition}
We say that \(\mathcal{L}\) is \(L^{\infty}(\Omega)\) regular at some \(u\in \mathcal{A}\) when there exists a sequence \((u_k)_{k}\subset \mathcal{A}\cap L^{\infty}(\Omega)\) converging to \(u\) in \(W^{1,p}(\Omega)\) and such that \((H(u_k, \nabla u_k))_{k\geq 1}\) converges to \(H(u, \nabla u)\) in \(L^{1}(\Omega)\).
\end{definition}

We emphasize the fact that the approximating functions are required to remain in the same Dirichlet class \(W^{1,p}_\varphi(\Omega)\); in particular, the bounded approximation must preserve the prescribed boundary datum, not merely approximate \(u\) in the interior.

Deciding whether a functional \(\mathcal{L}\) is  \(L^{\infty}(\Omega)\)-regular at every \(u\in \mathcal{A}\) is often the first step to discard the Lavrentiev gap between \(W^{1,1}(\Omega)\) and \(W^{1,\infty}(\Omega)\), see e.g. \cite{BorowskiChlebicka2022, Bousquet-Annali, BousquetMaricondaTreu2014, BousquetMaricondaTreu2024,   CorboEspositoDeArcangelis, Zhikov-2006}.

In~\cite{MaricondaTreu2020}  and, recently, in~
\cite{BousquetMaricondaTreu2026}, we have formulated the following conjecture:  if \(H\) is continuous, then \(\mathcal{L}\) is \(L^{\infty}(\Omega)\)-regular at every \(u\in \mathcal{A}\). Moreover, we have established weak versions of this conjecture under a variety of additional assumptions. 
In the present paper, we show that those additional assumptions are purely technical and that mere continuity is sufficient to reach the same conclusion:

\begin{theorem}\label{teo}
Let \(H:\R\times\R^N\to\R\)
be continuous. Then \(\mathcal L\) is \(L^\infty(\Omega)\)-regular at every \(u\in\mathcal A\). 
\end{theorem}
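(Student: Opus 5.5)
Our plan is to pass through the convex envelope of \(H\) in the gradient variable, use the convex theory of \cite{BousquetMaricondaTreu2026} there, and come back to \(H\) by a relaxation argument, as the abstract indicates. We replace \(H\) by \(\overline H(t,\xi)=|H(t,\xi)|+|\xi|^p\), which is nonnegative, continuous and \(p\)-coercive. Let \(G(t,\cdot)\) be the convex envelope of \(\overline H(t,\cdot)\). Then \(|\xi|^p\le G\le \overline H\), and we must check that \(G\) is continuous in \((t,\xi)\). By Carathéodory, \(G(t,\xi)\) is an infimum of \(\sum\lambda_i\overline H(t,\xi_i)\) over at most \(N+1\) points. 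The coercivity bounds the relevant \(\xi_i\) locally uniformly, and this should give continuity.

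\textbf{Step 1 (approximation for the convex problem).} We would apply the results of \cite{BousquetMaricondaTreu2026} to the functional \(\int G(u,\nabla u)\). That paper gives \(L^\infty\)-regularity for Lagrangians that are convex in \(\xi\), possibly under side conditions such as superlinearity or a lower bound. Here \(G\ge|\xi|^p\) is of this kind. We obtain bounded \(v_k\in W^{1,p}_\varphi(\Omega)\) with \(v_k\to u\) in \(W^{1,p}\) and \(G(v_k,\nabla v_k)\to G(u,\nabla u)\) in \(L^1\). We may take the \(v_k\) to be truncation-type or smoothed approximants, so that they are locally Lipschitz or piecewise affine on a fine mesh away from a small set.

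\textbf{Step 2 (relaxation back to \(\overline H\) while staying bounded).} For each fixed \(k\) we use the classical relaxation theorem (Dacorogna, Ekeland–Temam). The \(L^1\) integrand is continuous and coercive, so for bounded \(v\) we can find \(w_j\in W^{1,p}_\varphi\) with \(w_j\rightharpoonup v\) weakly, \(w_j\to v\) uniformly, and \(\int\overline H(w_j,\nabla w_j)\to\int G(v,\nabla v)\). This is done by locally splitting \(\nabla v\) into the Carathéodory gradients through laminates, and the uniform convergence keeps \(w_j\) bounded and in the Dirichlet class. A diagonal choice then gives bounded \(u_k\in\mathcal A\) with \(u_k\rightharpoonup u\) and \(\int\overline H(u_k,\nabla u_k)\to\int G(u,\nabla u)\).

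\textbf{Step 3 (the main obstacle: upgrade to strong convergence and to \(H\) itself).} Relaxation yields only weak convergence and the limit \(\int G(u,\nabla u)\), which may be strictly less than \(\int\overline H(u,\nabla u)\). To correct this we would work on the set where \(G(u,\nabla u)<\overline H(u,\nabla u)\) and apply Step 1 with \(G\) replaced by a convex function tailored to \(u\). Our first attempt is \(F=\overline H\) on a neighbourhood of the graph values and \(G\) elsewhere. An alternative, which we prefer, is to avoid relaxing \(u\) at all. We truncate-and-interpolate as in \cite{BousquetMaricondaTreu2026}: set \(u_k=T_k u\) on \(\{|u|\le k\}\) and modify it only on \(\{|u|>k\}\), whose measure tends to \(0\). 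On that set we use the convex-envelope-controlled construction of Steps 1–2 to bound the energy by \(C\int_{\{|u|>k\}}\overline H(u,\nabla u)+o(1)\). This gives \(\overline H(u_k,\nabla u_k)\) equi-integrable and \(\nabla u_k\to\nabla u\) a.e. Vitali's theorem then gives \(H(u_k,\nabla u_k)\to H(u,\nabla u)\) in \(L^1\), using the continuity of \(H\), and \(u_k\to u\) in \(W^{1,p}\).

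The key difficulty is controlling the energy of the modification near the level sets \(\{u=\pm k\}\) for a merely continuous \(H\). This is exactly where the convex envelope and relaxation must replace growth assumptions, and it is the step we expect to need the most work.
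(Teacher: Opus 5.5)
There is a genuine gap. The step that carries the whole difficulty, namely making the energy of the modification on the tails tend to zero for a merely continuous \(H\), is left open, and the route you sketch around it does not lead to the theorem.

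Steps 1--2 relax \(u\) itself (through \(v_k\)). So, as you note, the energies converge to \(\int_\Omega G(u,\nabla u)\,dx\), which can be strictly smaller than \(\int_\Omega\overline H(u,\nabla u)\,dx\). The required convergence in energy then fails, so these steps are a dead end rather than part of a proof. Your continuity argument for \(G\) also uses superlinearity. For \(p=1\) the Carath\'eodory points are not confined, and \(G(\cdot,0)\) can fail to be lower semicontinuous. The paper only needs \(\Gamma:=G(\cdot,0)\) to be Borel, which holds since it is an infimum of continuous functions.

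In Step 3 the bound \(C\int_{\{|u|>k\}}\overline H(u,\nabla u)\,dx+o(1)\) is asserted with no construction, and there is no reason for it to hold. Consider a modification of the form \(k+b\) on an open set \(U\supset[u>k]\), with \(b=0\) on \(\partial U\) and \(b\) small. Up to the small oscillation in \(t\), its energy is \(\int_U\overline H(k,\nabla b)\,dx\geq\int_U G(k,\nabla b)\,dx\geq|U|\,\Gamma(k)\), by Jensen, since \(\nabla b\) has zero mean. But \(\Gamma(k)\) at an individual level bears no relation to \(\overline H(u(x),\nabla u(x))\) where \(u(x)\gg k\). This is exactly the obstruction coming from the possible non-integrability of \(H(u,0)\).

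The missing idea is to apply relaxation not to \(u\) but only to a constant on a neighbourhood of the tail, and to measure the cost by \(\Gamma\). The paper does this in four steps.

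(i) Choosing \(\zeta_t\in\partial G(t,\cdot)(0)\) gives the affine minorant \(\overline H(t,\xi)\geq\Gamma(t)+\langle\zeta_t,\xi\rangle\). Then \cite[Proposition~3.1]{BousquetMaricondaTreu2026} yields \(\Gamma\circ u\in L^1(\Omega)\).

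(ii) By \cite[Lemma~A.2]{BousquetMaricondaTreu2026} there are levels \(M_k\to+\infty\) with \(|[u>M_k]|(\Gamma(M_k)+1)\to0\). These form a well-chosen sequence, not every \(k\). One also takes open sets \(\Omega_k\supset[u>M_k]\) with the same property.

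(iii) Write \(\Gamma(M_k)+1\geq\sum_i\mu_i\overline H(M_k,\eta_i)\) with \(\sum_i\mu_i\eta_i=0\). Apply \cite[Chapter~X, Proposition~1.3]{EkelandTemam} to the affine maps \(x\mapsto\langle\eta_i,x\rangle\), whose \(\mu\)-combination is \(0\). This gives a Lipschitz \(b_k\) vanishing on \(\partial\Omega_k\), with \(\|b_k\|_\infty\leq\varepsilon_k\), \(|\nabla b_k|\leq L_k\), and \(\int_{\Omega_k}\overline H(M_k,\nabla b_k)\,dx\leq|\Omega_k|(\Gamma(M_k)+1)+\varepsilon_k\). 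The oscillation in \(t\) is absorbed by uniform continuity of \(\overline H\) on \([M_k,M_k+2]\times\overline B(0,L_k)\).

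(iv) \(c_k^+:=M_k+b_k\), and its analogue \(c_k^-\) on the negative tail, serve as barriers. The function \(u_k:=\min(c_k^+,\max(u,c_k^-))\) coincides with \(u\) off the tails, and it converges in norm and in energy by \cite[Lemma~2.6]{BousquetMaricondaTreu2026}.

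Without (i)--(iii), your Step 3 has no mechanism forcing the tail energy to vanish.
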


\subsection{Ideas of the proof}
Generally speaking, approximating a function  \(u\in W^{1,p}_\varphi(\Omega)\) by bounded functions in the same Dirichlet class is often based on truncations: for every large \(k\geq 1\), one considers the bounded competitors
\[
T_k(u):=\min\{k,\max\{u,-k\}\}.
\]
On the positive tail \([u>k]\) and on the negative tail \([u<-k]\),  the gradients of \(T_k(u)\) vanish. One can prove that \(T_k(u)\) converges to \(u\) in energy; that is,
\[
\lim_{k\to +\infty} H(T_k(u),\nabla T_k(u))=H(u,\nabla u), \qquad \textrm{in } L^{1}(\Omega),
\]
when 
\(H(u,0)\) is summable, see e.g. \cite[Proposition 2.4]{BousquetMaricondaTreu2026}.
However, the latter property is not automatically implied by the finite-energy assumption \(u\in \mathcal{A}\). 
As a matter of fact,  it may even happen that \(H(u,0)\not\in L^{1}(\Omega)\) for some \(u\in \mathcal{A}\), see \cite[Example 2.10, Example 2.13]{BousquetMaricondaTreu2026}.
This is the basic obstruction behind the bounded approximation problem.

In~\cite{BousquetMaricondaTreu2026}, we  propose more general truncation constructions,  replacing the constants \(k\) by suitable functions  on \(\Omega\). Typically, we introduce a family of upper barriers \(c_{k}^+\subset \mathcal{A}\) and of lower barriers \(c_{k}^-\subset \mathcal{A}\) converging uniformly to \(+\infty\) or \(-\infty\),  and that satisfy good estimates on the tails of \(u\), namely
\[
\lim_{k\to +\infty}\int_{[u> c_{k}^{+}]}\overline{H}(c_{k}^{+}, \nabla c_{k}^{+})\,dx =0, \qquad \lim_{k\to +\infty}\int_{[ u< c_{k}^{-}]}\overline{H}(c_{k}^{-}, \nabla c_{k}^{-})\,dx =0.
\] 
Then, the corresponding truncated sequence \(u_k=\min(c_{k}^+, \max(u, c_{k}^-)\) approximates \(u\) in norm and in energy, see~\cite[Lemma 2.1]{BousquetMaricondaTreu2026}. 
Naturally, the heart of the matter is the construction of those barriers, depending on the specific properties of each Lagrangian \(H\). We proceed to present the construction of such barriers for autonomous and continuous Lagrangians, leading to the proof of Theorem~\ref{teo}.

{\bf Step 1}
When \(H\) is nonnegative and convex in the gradient variable, the functional \(\mathcal{L}\) is \(L^{\infty}(\Omega)\)-regular at every \(u\in \mathcal{A}\), see~\cite[Corollary 3.3]{BousquetMaricondaTreu2026}. 
In our present setting where \(H\) is merely continuous, 
it is thus natural to introduce the convex envelope of \(H\) with respect to the gradient variable \(\xi\in \R^N\).
Actually, it is sufficient to consider this convex envelope when evaluated at \(\xi=0\). We thus define:
\[
\Gamma(t):=\bigl(\overline H(t,\cdot)\bigr)^{**}(0),
\]
where the biconjugate is taken with respect to the gradient variable.
Exactly as in~\cite{BousquetMaricondaTreu2026}, one can prove (see~Lemma~\ref{lm-convexified-zero-gradient-cost} below) that 
\[
\Gamma\circ u \in L^{1}(\Omega).
\]

{\bf Step 2}
Assume for simplicity that \(\overline{H}\) is superlinear (which is the case when \(p>1\)). Then, by the finite-dimensional representation of the convex envelope, for every \(t\in\R\), the function \(\Gamma(t)\) can be written as 
\[
\Gamma(t)=\sum_{i=1}^{N+1}\mu_{i}^t\overline H(t,\xi_{i}^t),
\]
for some  nonnegative coefficients \(\mu_{1}^t, \dots, \mu_{N+1}^t> 0\) and vectors \(\xi_{1}^t, \dots, \xi_{N+1}^t\).
If the coefficients \(\mu_{i}^t\) were  bounded from below by a positive constant not depending on \(t\), then one could conclude at once by relying on \cite[Theorem 2.8]{BousquetMaricondaTreu2026}. Since there is no such lower bound in general, we need to introduce a new ingredient. Let us focus on the construction of upper barriers on the positive tails \([u>t]\), the lower barriers being defined similarly. Relying on a classical tool in the relaxation theory of  variational functionals \cite[Chapter X, Proposition 1.3]{EkelandTemam}, we introduce for every \(t\in \R\), a function \(b_t\in W^{1,\infty}(\Omega)\) such that
\[
\int_{[u>t]}\overline{H}(t,\nabla b_t(x))\,dx \approx \sum_{i=1}^{N+1}\int_{[u>t]}\mu_{i}^{t} \overline{H}(t,\xi_{i}^t)\,dx=|[u>t]|\Gamma(t).
\]

{\bf Step 3} 
Using that \(\liminf_{t\to +\infty}|[u>t]|\Gamma(t)=0\) (see e.g.~\cite[Lemma A.2]{BousquetMaricondaTreu2026}),  it follows from the previous step that for some sequence \((t_k)_{k\geq 1}\) converging to \(+\infty\), 
\[
\lim_{k\to +\infty} \int_{[u>t_k]}\overline{H}(t_k,\nabla b_{t_k}(x))\,dx=0.
\]
Then, the function \(c_{k}^{+}:= t_k + b_{t_k}^{+}\) is a suitable upper barrier. Lower barriers \(c_{k}^-\) are defined similarly and we can conclude that the family \(u_{k}:=\min(c_{k}^+, \max(c_{k}^-,u))\) is a bounded admissible function which approximates \(u\) in norm and in energy.

\subsection{The role of generative artificial intelligence}
After several interactions with ChatGPT-5.5 based essentially on different preliminary versions of~\cite{BousquetMaricondaTreu2026} (which also substantially modified the content and the presentation of the latter), we asked it whether the statement of Theorem~\ref{teo} (which was still a conjecture at that time) was true. It then gave a proof of the statement which, after a small number of requests to correct or clarify certain points of the argument, turned out to be valid. This proof introduced three crucial observations:
\begin{enumerate}
\item Proposition~3.1 of~\cite{BousquetMaricondaTreu2026}, formulated there for a Lagrangian \(H:\R\times\R^N\to\R^+\) which was convex at the origin, can be applied to its convex envelope \((t,\xi)\mapsto(H(t,\cdot))^{**}(\xi)\), leading to the introduction of  \(\Gamma(t)\) that satisfies the crucial property: \(\Gamma\circ u\in L^{1}(\Omega)\). 
\item One can rely on the finite-dimensional representation of the convex envelopes to translate this summability property into estimates on \(\overline{H}(t,\xi_{i}^t)\), for some vectors \(\xi_{i}^{t}\) in a finite family containing \(0\) in its convex hull (the role of those families had already been identified in \cite{BousquetMaricondaTreu2026}). 
\item Using a rather long argument inspired by some techniques in homogeneisation theory, and relying on auxiliary convergence results for oscillating functions, one can construct upper and lower barriers. 
\end{enumerate}
The proof that we present here is much shorter and technically simpler than the original proof but follows the same strategy. The main difference is related to the construction of barriers (3), that is now based on an approach reminiscent of the relaxation theory in the Calculus of Variations rather than homogeneisation (we acknowledge however the fact that both theories share many common ideas).  

The history of this proof suggests a broader reflection on the role of generative artificial intelligence in mathematical research. It may be viewed, from the authors' perspective, as an early example in which ChatGPT was not used only to rewrite arguments, improve exposition, or assemble ideas already selected by the authors, but also to provide a valid proof of a new result.

This type of interaction seems to belong to a recent stage in the use of advanced reasoning models in mathematics, and it naturally stimulates a reflection\footnote{This paragraph does not discuss the ecological, social and institutional consequences of the use of artificial intelligence, that should not be left aside  from this reflection.} on what the role of the mathematician may become in the not-so-distant future. Such tools may help to compare strategies, detect analogies between different arguments, and suggest unexpected combinations of known techniques. They may therefore accelerate the exploratory phase of research. They do not, however, replace the work of the mathematician. The formulation of the correct statement, the identification of the precise hypotheses, the detection of possible gaps, the choice of the appropriate level of generality, the explicit connections with the existing literature, the presentation of the proof emphasizing the main difficulties,  remain mathematical responsibilities. In this sense, generative artificial intelligence does not replace mathematical authorship; rather, it changes part of the exploratory process and makes even more visible the distinction between producing a proof and producing verified mathematics that can be easily understood and used by other mathematicians.

\section{Proof of Theorem~\ref{teo}}

We shall use the following standard notation from convex analysis. If \(f:\R^N\to\R\) is bounded from below by an affine function, then \(f^{**}\) denotes its biconjugate, equivalently its lower semicontinuous convex envelope. In this paper, when applied to a function of \((t,\xi)\), the operation \(^{**}\) is always taken with respect to the gradient variable \(\xi\); see, for instance,  \cite[Chapter~I, Sections~2--4]{EkelandTemam} or \cite[Part~II, Chapter~4, Section~4.8]{FonsecaLeoni}.
We mention the following representation of \(f^{**}\) when  \(f:\R^N\to \R\) is bounded from below by an affine function, see e.g.~\cite[Theorem~4.96 and Remark~4.93 (iii)]{FonsecaLeoni}:  
\begin{equation}\label{cvx-representation}
\forall \xi \in \R^N, \qquad f^{**}(\xi)=\inf \left\lbrace\sum_{i=1}^{N+1}\mu_i f(\eta_i) : \mu_i\geq 0, \eta_i\in \R^N, \sum_{i=1}^{N+1}\mu_i=1,  \sum_{i=1}^{N+1}\mu_i\eta_i=\xi\right\rbrace.
\end{equation}

The first ingredient in the proof of Theorem~\ref{teo} is given by the following result:

\begin{lemma}\label{lm-convexified-zero-gradient-cost}
Let \(H:\R\times\R^N\to\R\) be continuous, and recall that
\[
\overline H(t,\xi):=|H(t,\xi)|+|\xi|^p.
\]
Set
\[
\Gamma(t):=\bigl(\overline H(t,\cdot)\bigr)^{**}(0).
\]
If \(u\in\mathcal A\), then
\[
\Gamma\circ u\in L^1(\Omega).
\]
\end{lemma}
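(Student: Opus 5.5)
The plan is to pass to the gradient-convexified Lagrangian, which is still summable along \(u\), and then to recover \(\Gamma(u)\) from it through a Jensen inequality on the level sets of \(u\) combined with the coarea formula.

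\textbf{Step 1: the convexified integrand.} Set \(G(t,\xi):=\bigl(\overline H(t,\cdot)\bigr)^{**}(\xi)\). Since \(\overline H\ge 0\), we have \(0\le G\le \overline H\), \(G(t,\cdot)\) is convex, and \(\Gamma(t)=G(t,0)\). As \(u\in\mathcal A\) and \(\nabla u\in L^p\), we get \(\overline H(u,\nabla u)\in L^1(\Omega)\), hence \(G(u,\nabla u)\in L^1(\Omega)\). Measurability of \(\Gamma\circ u\) and \(G(u,\nabla u)\) follows from the representation \eqref{cvx-representation}: \(G\) is an infimum of continuous functions, which can be reduced to a countable dense family of \((\mu_i,\eta_i)\), so \(G\) is upper semicontinuous.

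Moreover \(0\le\Gamma(t)\le\overline H(t,0)\), which is continuous in \(t\), so \(\Gamma\) is bounded on \([-M,M]\). Here \(M:=\|\varphi\|_\infty\), and we extend \(u\) by \(\varphi\) outside \(\Omega\). Since \(\Omega\) is bounded, \(\Gamma\circ u\) is therefore summable on \([|u|\le M]\). On the set \([\nabla u=0]\) we have \(\Gamma(u)=G(u,\nabla u)\), which is summable. It remains to treat \([u>M]\cap[\nabla u\ne0]\); the negative tail is handled symmetrically.

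\textbf{Step 2: Jensen on level sets.} The extended function \(u\) lies in \(W^{1,p}_{\rm loc}(\R^N)\), and for \(t>M\) the superlevel set \(E_t:=[u>t]\) is contained in \(\Omega\), hence bounded. By the Sobolev coarea formula, for a.e. \(t>M\) the set \(E_t\) has finite perimeter, and its reduced boundary coincides, up to an \(\mathcal H^{N-1}\)-null set, with \([u=t]\cap[\nabla u\ne0]\), with outer normal \(-\nabla u/|\nabla u|\). Since \(E_t\) is bounded, \(\int_{\R^N} D\mathbf 1_{E_t}=0\), so
\[
\int_{[u=t]}\frac{\nabla u}{|\nabla u|}\,d\mathcal H^{N-1}=0 .
\]
Consider on \([u=t]\cap[\nabla u\ne0]\) the measure \(dm_t:=|\nabla u|^{-1}d\mathcal H^{N-1}\). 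Its total mass is \(m_t=\int_{[u=t]}|\nabla u|^{-1}\,d\mathcal H^{N-1}\), which is finite for a.e. \(t\) by coarea applied to the indicator of \([\nabla u\ne0]\). By the identity above, the barycenter of \(\nabla u\) with respect to \(m_t\) is \(0\). Jensen's inequality for the convex function \(G(t,\cdot)\) then gives
\[
m_t\,\Gamma(t)\le\int_{[u=t]}\frac{G(t,\nabla u)}{|\nabla u|}\,d\mathcal H^{N-1}.
\]

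\textbf{Step 3: integration in \(t\).} Integrating the last inequality over \(t>M\) and applying the coarea formula on both sides yields
\[
\int_{[u>M]\cap[\nabla u\ne0]}\Gamma(u)\,dx\le\int_{[u>M]\cap[\nabla u\ne0]}G(u,\nabla u)\,dx<\infty .
\]
Together with Step 1, this gives \(\Gamma\circ u\in L^1(\Omega)\).

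\textbf{Main obstacle.} I expect the hardest part to be the rigorous justification in Step 2: the vanishing of the integral of the normals for a.e. level, and the applicability of Jensen's inequality for a.e. \(t\). This requires the finiteness of \(m_t\) and the integrability of \(G(t,\nabla u)/|\nabla u|\) on a.e. level set, and the latter follows from coarea once \(G(u,\nabla u)\in L^1\).
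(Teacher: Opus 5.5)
Your argument is correct, but it is organized differently from the paper's. The paper carries out no level-set analysis here. It only verifies the hypotheses of \cite[Proposition~3.1]{BousquetMaricondaTreu2026} for the integrand \(\overline H\) with \(K:=\Gamma\):
\begin{itemize}
\item the affine minorant \(\overline H(t,\xi)\geq\Gamma(t)+\langle\zeta_t,\xi\rangle\), obtained from a subgradient \(\zeta_t\in\partial G_t(0)\) of the convexified integrand;
\item the bound \(0\leq\Gamma(t)\leq|H(t,0)|\);
\item the Borel measurability of \(\Gamma\), which by the Carath\'eodory representation is an infimum of continuous functions of \(t\).
\end{itemize}
It then uses that proposition as a black box.

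You instead prove the integrability directly, using three ingredients: the coarea formula, the vanishing flux \(\int_{[u=t]}\nabla u/|\nabla u|\,d\mathcal H^{N-1}=0\) for a.e. \(t>\|\varphi\|_\infty\), and Jensen's inequality for \(G(t,\cdot)\) with respect to \(|\nabla u|^{-1}\mathcal H^{N-1}\) on \([u=t]\). Because the barycenter is \(0\), your Jensen step is exactly the subgradient inequality integrated over a level set, with the linear term killed by the zero flux. So the mechanism is the same. Your version is self-contained and never requires choosing \(\zeta_t\); the paper's version is shorter and isolates a reusable abstract criterion. The price on your side is reliance on fine properties of Sobolev functions: the precise representative, the \(W^{1,1}\) coarea formula, and the identification of \(\partial^*[u>t]\) and its normal for a.e. \(t\). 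You should cite these explicitly.

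Two simplifications are available.
\begin{itemize}
\item The zero-flux identity can be obtained without finite-perimeter theory. Take \(\Psi'=\psi\in C^\infty_c((M,\infty))\) with \(\Psi=0\) on \((-\infty,M]\), so that \(\Psi(u)\) vanishes outside \(\Omega\). Then \(\int_{\R^N}\nabla[\Psi(u)]\,dx=0\), and combining this with coarea for arbitrary \(\psi\) gives the identity for a.e. \(t>M\).
\item You can bound the right-hand side of Jensen by \(\int\overline H(t,\nabla u)\,dm_t\) and then integrate \(\overline H(u,\nabla u)\) instead of \(G(u,\nabla u)\). This removes any need for joint measurability of \(G\).
\end{itemize}
If you keep \(G\), note that its joint upper semicontinuity requires rewriting the representation with \(\eta_i=\xi+\theta_i\) and \(\sum_i\mu_i\theta_i=0\), because as written the constraint depends on \(\xi\). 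The reduction to a countable family is not needed.
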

\begin{proof}
We apply \cite[Proposition~3.1]{BousquetMaricondaTreu2026} to the nonnegative continuous integrand \(\overline H\). Since \(u\in\mathcal A\), we have
\(\overline H(u,\nabla u)
\in L^1(\Omega)\).
For every \(t\in\R\), set
\[
G_t(\xi):=\bigl(\overline H(t,\cdot)\bigr)^{**}(\xi).
\]
Since \(\overline H\) is nonnegative, the function \(G_t\) is a finite nonnegative convex function. 
Choose
\(\zeta_t\in\partial G_t(0)\).
Then, for every \(\xi\in\R^N\),
\[
G_t(\xi)\geq G_t(0)+\langle\zeta_t,\xi\rangle
=
\Gamma(t)+\langle\zeta_t,\xi\rangle.
\]
Since \(G_t\leq\overline H(t,\cdot)\), it follows that
\[
\overline H(t,\xi)
\geq
\Gamma(t)+\langle\zeta_t,\xi\rangle
\qquad\text{for every }(t,\xi)\in\R\times\R^N.
\]
Moreover,
\[
0\leq\Gamma(t)\leq\overline H(t,0)=|H(t,0)|.
\]
Thus \(\Gamma\) is locally bounded. 
Moreover, by~\eqref{cvx-representation}, for every \(t\in\R\),
\begin{equation}\label{eq228}
\Gamma(t)=\inf \left\lbrace\sum_{i=1}^{N+1}\mu_i \overline{H}(t,\eta_i) : \mu_i\geq 0, \eta_i\in \R^N, \sum_{i=1}^{N+1}\mu_i=1,  \sum_{i=1}^{N+1}\mu_i\eta_i=0\right\rbrace
\end{equation}
so that \(\Gamma\) is Borel measurable, as the infimum of a family of continuous functions.
Therefore the assumptions of \cite[Proposition~3.1]{BousquetMaricondaTreu2026} are satisfied with
\(K:=\Gamma\) and the conclusion follows. 
\end{proof}

We can present the proof of our main result:

\begin{proof}[Proof of Theorem~\ref{teo}]
Let \(u\in\mathcal A\). Set
\[
\Gamma(t):=\bigl(\overline H(t,\cdot)\bigr)^{**}(0).
\]
By Lemma~\ref{lm-convexified-zero-gradient-cost},
\[
\Gamma\circ u\in L^1(\Omega).
\]
By \cite[Lemma A.2]{BousquetMaricondaTreu2026}, there exist two sequences \((M_{k}^+)_{k\geq 1}\) and \((M_{k}^-)_{k\geq 1}\) both converging to \(+\infty\) such that 
\[
\lim_{k\to +\infty}|[u>M_k^+]|\bigl(\Gamma(M_k^+)+1\bigr)=0, \quad
\lim_{k\to +\infty}
|[u<-M_k^-]|\bigl(\Gamma(-M_k^-)+1\bigr)=0.
\]
By regularity of the Lebesgue measure, one can find for every \(k\geq 1\) two bounded open sets 
\[
\Omega_{k}^+\supset [u>M_k^+], \quad \Omega_{k}^-\supset [u<-M_k^-]
\]
such that
\[
|\Omega_{k}^+|\leq |[u>M_k^+]|+\frac{1}{k\bigl(\Gamma(M_k^+)+1\bigr)}, \quad 
|\Omega_{k}^-|\leq |[u<-M_k^-]|+\frac{1}{k\bigl(\Gamma(-M_k^-)+1\bigr)}.
\]
Hence, 
\begin{equation}\label{eq921}
\lim_{k\to +\infty}|\Omega_{k}^+|\bigl(\Gamma(M_k^+)+1\bigr) = 0, \quad \lim_{k\to +\infty}|\Omega_{k}^-|\bigl(\Gamma(-M_k^-)+1\bigr)=0.
\end{equation}
We first construct the upper barriers. Fix \(k\geq1\). 
By the finite-dimensional representation of the convex envelope of \(\overline{H}(M_{k}^+,\cdot)\) at the origin, see~\eqref{eq228} with \(t=M_{k}^{+}\), 
there exist 
\(
\eta_{i,k}^+\in\R^N\) and 
\(
\mu_{i,k}^+>0\),
for \(i=1,\dots,N+1\),
such that
\begin{equation}\label{eq318}
\sum_{i=1}^{N+1}\mu_{i,k}^+=1,
\quad
\sum_{i=1}^{N+1}\mu_{i,k}^+\eta_{i,k}^+=0,
\end{equation}
and
\begin{equation}\label{tag:bipolar!}
\sum_{i=1}^{N+1}
\mu_{i,k}^+
\overline H(M_k^+,\eta_{i,k}^+)
\leq
\Gamma(M_k^+)+1.
\end{equation}
Let \(L_k:=\max_{1\leq i \leq N+1}|\eta_{i,k}^+|\). By uniform continuity of \(\overline{H}\) on the compact set \([M_{k}^+,M_{k}^++2]\times\overline{B}(0,L_k)\) (where \(\overline{B}(0,L_k)\) denotes the closed Euclidean ball of radius \(L_k\) and center \(0\) in \(\R^N\)), there exists \(\varepsilon_k\in (0,1/k)\) such that, for every \(\xi\in\overline{B}(0,L_k)\) and every \(t\in[M_{k}^+,M_{k}^++2\varepsilon_k]\),
\begin{equation}\label{eq973}
\left|\overline{H}(t,\xi)-\overline{H}(M_{k}^+,\xi)\right|\leq \frac{1}{k}.
\end{equation}
By \cite[Chapter X, Proposition 1.3]{EkelandTemam} applied to the function \(f(x,\xi):=\overline{H}(M_{k}^+, \xi)\) and the affine functions \(u_i(x):=\langle \eta_{i,k}^{+},x\rangle\) for \(i=1, \dots, N+1\),  on the open set \(\Omega_{k}^+\),
there exists \(b_{k}^+\in W^{1,\infty}(\Omega_{k}^+) \) that is continuous\footnote{The continuity of \(b_{k}^+\) on \(\overline{\Omega_{k}^+}\) is not explicitly stated in  \cite[Chapter X, Proposition 1.3]{EkelandTemam} but is an unambiguous outcome of its proof.} on \(\overline{\Omega_{k}^+}\) and such that for a.e. \(x\in\Omega_{k}^+\),  
\begin{equation}\label{eq978}
\left|\int_{\Omega_{k}^+}\overline{H}(M_{k}^+,\nabla b_{k}^+(x))\,dx - |\Omega_{k}^+|\sum_{i=1}^{N+1}\mu_{i,k}^+\overline{H}(M_{k}^+, \eta_{i,k}^+)\right|\leq \varepsilon_k,
\end{equation}
\begin{equation}\label{eq971}
\left|b_{k}^+(x)-\sum_{i=1}^{N+1}\mu_{i,k}^+ \langle \eta_{i,k}^+,x\rangle\right|\leq \varepsilon_k,
\end{equation} 
\begin{equation}\label{eq985}
\left|\nabla b_{k}^+(x)\right|\leq L_k,
\end{equation}
and for every \(z\in \partial \Omega_{k}^+\),
\begin{equation}\label{eq503}
b_{k}^+(z):= \sum_{i=1}^{N+1}\mu_{i,k}^+ \langle \eta_{i,k}^+,z\rangle.
\end{equation}
By \eqref{eq318}, one has \(\sum_{i=1}^{N+1}\mu_{i,k}^+ \eta_{i,k}^+=0\). Hence, \eqref{eq971} implies that
\(\|b_{k}^+\|_{L^{\infty}(\Omega_k^+)}\leq \varepsilon_k\)
while it follows from \eqref{eq503}  that \(b_{k}^{+}=0\) on \(\partial \Omega_{k}^+\). We are thus entitled to extend \(b_{k}^{+}\) by \(0\) outside \(\Omega_{k}^+\) and the resulting map, still denoted by \(b_{k}^{+}\), is Lipschitz continuous on \(\R^N\).  
Moreover, 
\[\omega_{k}^+:=\operatorname{osc} b_{k}^+=\max_{\R^N}b_{k}^+- \min_{\R^N} b_{k}^- \leq 2\varepsilon_k.
\]
By \eqref{eq973} and \eqref{eq985}, this implies that
\begin{align}
\int_{[u>M_{k}^+]}\max_{s\in [M_{k}^+, M_{k}^++\omega_{k}^+]}\overline{H}(s, \nabla b_{k}^+)\,dx
&\leq \int_{[u>M_{k}^+]} \overline{H}(M_{k}^+,\nabla b_{k}^+(x))\,dx + \frac{1}{k}|[u>M_{k}^+]|\\
&\leq \int_{\Omega_{k}^+} \overline{H}(M_{k}^+,\nabla b_{k}^+(x))\,dx + \frac{1}{k}|\Omega|,
\end{align}
where the last line follows from the fact that \(\overline{H}\) is nonnegative and \([u>M_{k}^{+}]\subset \Omega_{k}^+\cap \Omega\).
By \eqref{eq978}, this gives
\[
\int_{[u>M_{k}^+]}\max_{s\in [M_{k}^+, M_{k}^++\omega_{k}^+]}\overline{H}(s, \nabla b_{k}^+)\,dx
\leq |\Omega_{k}^+|\sum_{i=1}^{N+1}\mu_{i,k}^+\overline{H}(M_{k}^+, \eta_{i,k}^+) + \varepsilon_k + 
 \frac{1}{k}|\Omega|.
\]
In view of \eqref{tag:bipolar!}, one deduces that
\[
\int_{[u>M_{k}^+]}\max_{s\in [M_{k}^+, M_{k}^++\omega_{k}^+]}\overline{H}(s, \nabla b_{k}^+)\,dx
\leq  |\Omega_{k}^+|\left(
\Gamma(M_k^+)+1\right) + \varepsilon_k + 
 \frac{1}{k}|\Omega|.
\]
Relying on \eqref{eq921} and the fact that \(0<\varepsilon_k\leq 1/k\), one gets
\[
\lim_{k\to+\infty }\int_{[u>M_{k}^+]}\max_{s\in [M_{k}^+, M_{k}^++\omega_{k}^+]}\overline{H}(s, \nabla b_{k}^+)\,dx =0.
\]
For the negative tail, the same construction yields a sequence \((b_{k}^-)_{k\geq 1}\subset W^{1,\infty}(\Omega)\) such that
\[
\lim_{k\to+\infty }\int_{[u<-M_{k}^-]}\max_{s\in [-M_{k}^--\omega_{k}^-, -M_{k}^-]}\overline{H}(s, \nabla b_{k}^-(x))\,dx =0,
\]
where \(\omega_{k}^-:=\operatorname{osc} b_{k}^-\).
The conclusion then follows from \cite[Lemma 2.6.]{BousquetMaricondaTreu2026}.

\end{proof}
\section*{Declaration on the use of generative artificial intelligence}
During the preparation of this manuscript, the authors used ChatGPT-5.5, with high reasoning effort, as an auxiliary tool for mathematical brainstorming, proofs, and drafting support. In particular, ChatGPT has provided a first proof of Theorem~\ref{teo} applying the result of~\cite[Proposition~3.1]{BousquetMaricondaTreu2026} to the convex envelope of the Lagrangian and then using its finite-dimensional representation. The authors then worked on this proof to simplify it and to connect it with the relaxation theory, finally leading to  a much shorter argument, the one that is presented in this paper. All AI-generated suggestions were critically examined, revised, and integrated by the authors. The authors independently checked all statements, proofs, references, and conclusions, and take full responsibility for the final content of the manuscript.
\bibliographystyle{amsplain}
\bibliography{references}
\end{document}